\documentclass{jams-l-m}

\usepackage{amssymb}

\usepackage{dsfont}

\usepackage{mathabx}

\usepackage{graphicx}

\usepackage{hyperref}

\usepackage{xcolor}

\usepackage{mathbbol}

\usepackage{array} 

\usepackage{float}

\usepackage{soul} 

\usepackage[autostyle]{csquotes} 

\newtheorem{theorem}{Theorem}[section]

\theoremstyle{definition}

\theoremstyle{remark}

\numberwithin{equation}{section}

\DeclareSymbolFontAlphabet{\amsmathbb}{AMSb}%

\usepackage{tikz}

\usetikzlibrary{arrows,positioning,calc} 
\usetikzlibrary{decorations.pathreplacing}
\tikzset{
    >=stealth',
    punkt/.style={
           rectangle,
           rounded corners,
           draw=black, thick,
           text width=5.5em,
           minimum height=2em,
           text centered},
    punktl/.style={
           rectangle,
           rounded corners,
           draw=black, thick,
           text width=7em,
           minimum height=2em,
           text centered},
    pil/.style={
           ->,
           shorten <=4pt,
       shorten >=4pt
    },
    pildotted/.style={
           ->,
           shorten <=4pt,
           shorten >=4pt,
  dotted,
  },
    punktf/.style={
           rectangle,
           text width=4.0em,
           minimum height=1.5em,
           text centered},
    punktfTop/.style={
           rectangle,
           text width=4.0em,
           minimum height=1.5em,
           text centered,
           append after command={
               [thick,shorten >=0.2bp, shorten <=0.2bp]
               (\tikzlastnode.north west)edge(\tikzlastnode.north east)
}
    },
    punktfBot/.style={
           rectangle,
           text width=4.0em,
           minimum height=1.5em,
           text centered,
           append after command={
               [thick,shorten >=0.2bp, shorten <=0.2bp]
               (\tikzlastnode.south west)edge(\tikzlastnode.south east)
            }
    }
}

\usepackage{prodint}
\theoremstyle{plain}

\makeatletter
\newcommand*\bigcdot{\mathpalette\bigcdot@{.65}}
\newcommand*\bigcdot@[2]{\mathbin{\vcenter{\hbox{\scalebox{#2}{$\m@th#1\bullet$}}}}}
\makeatother

\usepackage[shortlabels]{enumitem} 

\usepackage{changepage}

\begin{document}

\title[Existence and uniqueness for non-linear forward equations]{Existence and uniqueness for non-linear forward equations: Addendum to ``Mean-field approximations in insurance'' (Hornung, 2026)}


\author{Christian Furrer}
\address{Department of Mathematical Sciences, University of Copenhagen, Universitetsparken 5, DK-2100 Copenhagen, Denmark}
\curraddr{}
\email{\href{mailto:furrer@math.ku.dk}{furrer@math.ku.dk}}
\thanks{}

\author{Philipp C.\ Hornung}
\address{Department of Mathematical Sciences, University of Copenhagen, Universitetsparken 5, DK-2100 Copenhagen, Denmark}
\curraddr{}
\email{\href{mailto:pcho@math.ku.dk}{pcho@math.ku.dk}}
\thanks{}


\date{}

\dedicatory{\today}

\begin{abstract}
Existence and uniqueness of solutions to non-linear forward integro-differential equations related to distribution-dependent Markov jump processes is established under standard boundedness and Lipschitz conditions. \\

\noindent\textbf{Keywords:} Banach fixed-point theorem; distribution-dependent jump process; integro-differential equation; total variation norm.

\end{abstract}

\maketitle



\section{Introduction}

Non-linear forward integro-differential equations are widely used to characterize the forward dynamics of cohorts when a mean-field model is adopted to capture interactions among individuals. In actuarial science, key areas of application of mean-field approximations include epidemic health insurance, see~\cite{FengGarrdio2011,FengEtAl2022,FrancisSteffensen2024}, as well as cyber insurance, confer with~\cite{FahrenwaldtEtAl2018,HillairetLopez2021,HillairetEtAl2022}.

Recently, in~\cite{Hornung2026}, mean-field approximations for a wide class of pure jump processes were studied in detail. In particular, it was demonstrated that, under rather mild regularity conditions, the limiting process becomes a distribution-dependent Markov jump process with occupation probabilities governed by a specific non-linear forward integro-differential equation. However, the corresponding question of uniqueness for this non-linear forward integro-differential equation was left open, confer with Remark~3.12 in~\cite{Hornung2026}. This is of course unsatisfactory, not least from an applied (numerical) perspective. 

The present paper serves as an addendum to~\cite{Hornung2026} and completely resolves this unsettled question in the affirmative. We establish both existence and uniqueness under standard boundedness and Lipschitz assumptions by first reformulating the non-linear forward integro-differential equation as a fixed-point problem for a solution mapping on a complete metric space of paths of probability measures and then showing that this mapping defines a contraction when the space is endowed with an appropriate family of time-weighted uniform total variation norms.

The mathematical framework, along with the precise formulation of the theorem and a detailed proof, is presented in the next section.

\section{Existence and uniqueness}
Consider a Borel space $(X, \mathcal{B}(X))$ and let $\mathcal{P}(X)$ denote the set of probability measures on $\mathcal{B}(X)$. In~\cite{Hornung2026}, it was shown that a wide class of Markov jump processes with values in the product space $X^n$, $n\gg 0$, each coordinate can be approximated by a distribution-dependent Markov jump process with values in $X$ whose occupation probabilities satisfy a non-linear forward integro-differential equation. For an initial distribution $\zeta\in\mathcal{P}(X)$, a terminal time $T>0$, and a distribution-dependent transition intensity kernel $\mu:[0,T]\times X\times\mathcal{P}(X)\times \mathcal{B}(X)\rightarrow [0,1]$ satisfying
\begin{enumerate}
       \item[(i)] $(t,x,\sigma)\mapsto\mu_t(x,\sigma,A)$ is measurable for all $A\in\mathcal{B}(X)$
       \item[(ii)] $A\mapsto\mu_t(x,\sigma,A)$ is a measure for all $(t,x,\sigma)\in[0,T]\times X\times\mathcal{P}(X)$
       \item[(iii)] $\mu_t(x,\sigma,\{x\})=0$ for all $x\in X$,
\end{enumerate}
the corresponding non-linear forward integro-differential equation reads
\begin{align}\label{eq:nonlinear}
\begin{split}
\frac{\partial}{\partial t} p_t(A) &= \int_X \mu_t(x,p_t,A) \, p_t(\mathrm{d}x) - \int_{A} \lambda_t(x,p_t) \, p_t(\mathrm{d}x), \quad t\in(0,T],\\
p_0(A)&=\zeta(A),
\end{split}
\end{align}
for $A \in \mathcal{B}(X)$; here $\lambda$ is the total transition intensity: $\lambda_t(x,\sigma) := \mu_t(x,\sigma,X\setminus \{x\})$. 

In the case where the intensity kernel $\mu$ is of the form $\mu_t(x,\sigma,\mathrm{d}y) \equiv \mu_t(x,\mathrm{d}y)$ -- that is, it has no distribution dependence -- the forward integro-differential equation~\eqref{eq:nonlinear} reduces to the familiar (linear) Kolmogorov forward integro-differential equation
\begin{align}\label{eq:linear}
\begin{split}
\frac{\partial}{\partial t} p_t(A) &= \int_X \mu_t(x, A) \, p_t(\mathrm{d}x) - \int_{A} \lambda_t(x) \, p_t(\mathrm{d}x), \quad t\in(0,T],\\
p_0(A)&=\zeta(A),
\end{split}
\end{align}
for $A \in \mathcal{B}(X)$. In this case, a first existence result was established in~\cite{Feller1940}, while the question of uniqueness was clarified in~\cite{FeinbergMandavaShiryaev2014,FeinbergMandavaShiryaev2022}. In particular, the following result was proven.
\begin{theorem}\label{th:linear}
       Suppose that $\sup_{t\in[0,T],x\in X}\lambda_t(x)<\infty$. Then there exists a unique solution to~\eqref{eq:linear}.
\end{theorem}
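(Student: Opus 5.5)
We will treat \eqref{eq:linear} as a linear evolution equation on the Banach space $\mathcal{M}(X)$ of finite signed measures on $\mathcal{B}(X)$ with the total variation norm $\|\cdot\|$, and read it in integrated form: $p_t(A)=\zeta(A)+\int_0^t \bigl(\int_X \mu_s(x,A)\,p_s(\mathrm{d}x)-\int_A\lambda_s(x)\,p_s(\mathrm{d}x)\bigr)\mathrm{d}s$ for all $A$. Set $\bar\lambda:=\sup_{t,x}\lambda_t(x)<\infty$. For $s\in[0,T]$ and $\nu\in\mathcal{M}(X)$ define the operator $(L_s\nu)(A):=\int_X\mu_s(x,A)\,\nu(\mathrm{d}x)-\int_A\lambda_s(x)\,\nu(\mathrm{d}x)$. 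By (ii) and (iii) this is a signed measure. By (i) it is measurable in $s$ (setwise). Since $\mu_s(x,\cdot)$ has mass $\lambda_s(x)\le\bar\lambda$, it also satisfies the key bound $\|L_s\nu\|\le 2\bar\lambda\|\nu\|$. Thus \eqref{eq:linear} is the equation $\dot p=L_tp$ with a uniformly bounded, time-measurable generator.

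\textbf{Existence.} First I would construct a solution by Picard iteration: $p^{(0)}_t=\zeta$ and $p^{(k+1)}_t=\zeta+\int_0^tL_sp^{(k)}_s\,\mathrm{d}s$, with the integral understood setwise. Well-definedness needs $s\mapsto (L_sp^{(k)}_s)(A)$ to be measurable. This follows inductively by Fubini, using measurability of $(s,x)\mapsto\mu_s(x,A)$ and of $s\mapsto p^{(k)}_s(A)$, the latter extended from sets to bounded functions by a monotone class argument. The differences then satisfy $\sup_{t}\|p^{(k+1)}_t-p^{(k)}_t\|\le (2\bar\lambda T)^k/k!\cdot 2$, so the series converges uniformly in total variation to a limit $p$ solving the integral equation. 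Since the integrand is bounded, $t\mapsto p_t(A)$ is absolutely continuous and \eqref{eq:linear} holds. Next I must check that $p_t\in\mathcal{P}(X)$. Mass conservation is easy: taking $A=X$ gives $(L_s\nu)(X)=\int\lambda_s\,\mathrm{d}\nu-\int\lambda_s\,\mathrm{d}\nu=0$. Positivity is less direct. I would rewrite the equation with the integrating factor $\bar\lambda$: $\dot p=-\bar\lambda p+K_tp$, where $K_t\nu(A)=\int\mu_t(x,A)\,\nu(\mathrm{d}x)+\int_A(\bar\lambda-\lambda_t)\,\mathrm{d}\nu$ is a positive operator. The Duhamel form $p_t=e^{-\bar\lambda t}\zeta+\int_0^te^{-\bar\lambda(t-s)}K_sp_s\,\mathrm{d}s$ can be solved by an iteration that preserves positivity. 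This gives a nonnegative solution, and by uniqueness (below) it coincides with $p$.

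\textbf{Uniqueness.} Take two solutions $p,q$ with values in $\mathcal{P}(X)$, or more generally bounded signed measures. Their difference $d_t=\int_0^tL_s d_s\,\mathrm{d}s$ satisfies $\|d_t\|\le 2\bar\lambda\int_0^t\|d_s\|\,\mathrm{d}s$, and Gronwall's inequality gives $d\equiv0$. To apply Gronwall I need $s\mapsto\|d_s\|$ to be measurable, or at least bounded, with an upper integral. Both solutions are probability measures, so $\|d_s\|\le2$, and I can work with the upper integral; alternatively, I can iterate the bound directly to get $\|d_t\|\le 2(2\bar\lambda t)^k/k!\to0$.

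The main obstacle I expect is the measure-theoretic bookkeeping rather than the estimates. This means making sense of the setwise time integrals and showing that $\int_0^t L_sp_s\,\mathrm{d}s$ is a measure with the total variation bound, which needs measurability of $s\mapsto\int f\,\mathrm{d}p_s$ for bounded measurable $f$. Together with that comes the precise notion of solution, namely absolute continuity of $t\mapsto p_t(A)$ with the derivative holding almost everywhere. I would handle this with a monotone class argument, and I would bound the total variation of $\int_0^t\nu_s\,\mathrm{d}s$ by testing against a Hahn decomposition.
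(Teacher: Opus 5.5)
Your proof is correct, but it takes a genuinely different route. The paper does not prove Theorem~\ref{th:linear} at all: it imports existence from Feller and uniqueness from Feinberg--Mandava--Shiryaev. In that theory the minimal solution is built as a positive series with the state-dependent survival factor $\exp(-\int_u^t\lambda_r(x)\,\mathrm{d}r)$, and uniqueness is derived from its regularity (no accumulation of jumps). That machinery also handles unbounded rates and identifies the solution with the law of the minimal jump process, which matches the probabilistic interpretation behind the mean-field limit.

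You instead use boundedness directly. $L_s$ is a bounded operator on $(\mathcal{S}(X),\|\cdot\|_{\text{TV}})$ with norm at most $2\bar\lambda$, so Picard iteration gives existence and Gronwall gives uniqueness. The uniformization $L_s\nu=K_s\nu-\bar\lambda\nu$, with $K_s$ mapping nonnegative measures to nonnegative measures, gives positivity. Your Duhamel series is the bounded-rate analogue of Feller's series, with the constant $\bar\lambda$ in place of $\lambda_r(x)$.

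Your route buys several things:
\begin{enumerate}
\item a short self-contained argument built from the same total variation estimates that drive the proof of the main theorem;
\item uniqueness in the larger class of bounded signed-measure-valued solutions;
\item the bound $\|p_t-p_u\|_{\text{TV}}\leq 2\bar\lambda|t-u|$, which is the continuity the paper tacitly uses when it asserts $\mathcal{M}(\sigma)\in C([0,T],\mathcal{P}(X))$;
\item an explicit statement that, for $\mu$ merely measurable in $t$, the equation must be read in integrated (almost-everywhere) form.
\end{enumerate}

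Two minor points. First, your difference bound should be $\sup_{t\leq T}\|p^{(k+1)}_t-p^{(k)}_t\|_{\text{TV}}\leq(2\bar\lambda T)^{k+1}/(k+1)!$. Your stated $2(2\bar\lambda T)^k/k!$ fails at $k=0$ when $\bar\lambda T>1$, but this is harmless since the corrected bound is still summable. Second, the first Picard scheme is redundant. The Duhamel iteration alone converges in total variation, because $\|K_s\nu\|_{\text{TV}}\leq\bar\lambda\|\nu\|_{\text{TV}}$. It directly produces a nonnegative solution, and mass conservation and your Gronwall step then finish the proof.
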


We now set out to prove an existence and uniqueness result for the non-linear version~\eqref{eq:nonlinear}. In the following, let $\mathcal{S}(X)$ denote the space of signed measures on $\mathcal{B}(X)$ and denote the total variation norm of any signed measure $\nu\in\mathcal{S}(X)$ by $\lVert \nu \rVert_{\text{TV}}$. Recall that
\begin{align*}
\lVert \nu \rVert_{\text{TV}} = \sup_{f \in B_1(X)} \bigg| \int_X f(x) \, \nu(\mathrm{d}x)\bigg|,
\end{align*}
where $B_1(X)$ is the space of Borel-measurable functions from $X$ to $[-1,1]$. The space $(\mathcal{S}(X),\|\cdot\|_{\text{TV}})$ is a Banach space. Further, let $C([0,T],\mathcal{S}(X))$ denote the set of all continuous functions $\nu:[0,T]\rightarrow\mathcal{S}(X)$, which equipped with the uniform norm 
\begin{align*}
       \|\nu\|_{\text{TV}}^{\infty}:=\sup_{t\in[0,T]}\|\nu_t\|_{\text{TV}}
\end{align*}
is a Banach space. For technical reasons, we also make use of the following family of equivalent weighted uniform norms:
\begin{align*}
       \|\nu\|_{\text{TV}}^{\infty,\beta}:=\sup_{t\in[0,T]}e^{-\beta t}\|\nu_t\|_{\text{TV}},\quad \beta>0.
\end{align*}
The set $C([0,T],\mathcal{P}(X))$ of all continuous functions $\sigma:[0,T]\rightarrow\mathcal{P}(X)$ is a closed subset of $C([0,T],\mathcal{S}(X))$ and thus a complete metric space when endowed with the metric induced by $\|\cdot\|_{\text{TV}}^{\infty}$ or $\|\cdot\|_{\text{TV}}^{\infty,\beta}$.

\begin{theorem}
Suppose that the following two conditions are satisfied:
\begin{itemize}
\item[(B1)] $\sup_{t \in [0,T], x \in X, \sigma \in \mathcal{P}(X)} \lambda_t(x,\sigma) < \infty$. 
\item[(L1)] There exists a constant $K > 0$ such that for all $t\in[0,T]$, $x \in X$, and $\sigma, \sigma' \in \mathcal{P}(X)$ it holds that
\begin{align*}
\lVert \mu_t(x,\sigma,\cdot) - \mu_{t}(x,\sigma',\cdot) \rVert_{\text{TV}} \leq K \lVert \sigma - \sigma' \rVert_{\text{TV}}.
\end{align*}
\end{itemize}
Then there exists a unique solution to~\eqref{eq:nonlinear}.
\end{theorem}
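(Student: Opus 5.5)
We plan to recast \eqref{eq:nonlinear} as a fixed-point problem and apply the Banach fixed-point theorem on $C([0,T],\mathcal{P}(X))$ with the metric induced by $\|\cdot\|_{\text{TV}}^{\infty,\beta}$, where $\beta$ will be chosen large. Fix a path $\sigma\in C([0,T],\mathcal{P}(X))$ and freeze the distribution argument, setting $\mu^\sigma_t(x,\mathrm{d}y):=\mu_t(x,\sigma_t,\mathrm{d}y)$. This kernel is measurable in $(t,x)$ because $t\mapsto\sigma_t$ is continuous and $(t,x,\sigma)\mapsto\mu_t(x,\sigma,A)$ is measurable. By (B1), its total intensity $\lambda^\sigma_t(x)=\lambda_t(x,\sigma_t)$ is bounded uniformly by $\bar\lambda:=\sup\lambda<\infty$. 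Theorem~\ref{th:linear} then gives a unique solution $p^\sigma$ of the linear equation \eqref{eq:linear} with kernel $\mu^\sigma$. We define $\Phi(\sigma):=p^\sigma$.

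Next we check that $\Phi$ maps $C([0,T],\mathcal{P}(X))$ into itself. The solution $p^\sigma$ consists of probability measures, since it gives the occupation probabilities of the associated Markov jump process, or alternatively because total mass is conserved and positivity comes from the Feller construction. For continuity, integrating the equation gives $\|p^\sigma_t-p^\sigma_s\|_{\text{TV}}\le 2\bar\lambda|t-s|$. A path $p$ solves \eqref{eq:nonlinear} exactly when $p=\Phi(p)$. To see the nontrivial direction, note that any solution $p$ of \eqref{eq:nonlinear} is continuous in TV by the same bound, so it is an admissible $\sigma$, and it solves the linear equation with kernel $\mu^p$. Uniqueness in Theorem~\ref{th:linear} then forces $p=\Phi(p)$. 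Existence and uniqueness therefore reduce to showing that $\Phi$ is a contraction.

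The contraction estimate is the main step. Take $\sigma,\sigma'$ with images $p=\Phi(\sigma)$ and $p'=\Phi(\sigma')$, and write the integrated form of the equations for the difference. For $f\in B_1(X)$,
\begin{align*}
\int f\,\mathrm{d}(p_t-p'_t)=\int_0^t\Big(\int\!\!\int (f(y)-f(x))\,\mu_s(x,\sigma_s,\mathrm{d}y)\,(p_s-p'_s)(\mathrm{d}x) + \int\!\!\int (f(y)-f(x))\,(\mu_s(x,\sigma_s,\cdot)-\mu_s(x,\sigma'_s,\cdot))(\mathrm{d}y)\,p'_s(\mathrm{d}x)\Big)\mathrm{d}s.
\end{align*}
The first inner term is bounded by $2\bar\lambda\|p_s-p'_s\|_{\text{TV}}$. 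For the second term, the $f(x)$ part involves the difference of total masses, which is at most the TV norm of the kernel difference; since $p'_s$ is a probability measure, (L1) bounds the whole term by $2K\|\sigma_s-\sigma'_s\|_{\text{TV}}$. Some care is needed because $\mu(\{x\})=0$, but the $f(x)$ term uses $\lambda$, which is the full mass of $\mu$, so this is consistent. Set $u_t:=\|p_t-p'_t\|_{\text{TV}}$. Taking the supremum over $f$ gives
\begin{align*}
u_t\le 2\bar\lambda\int_0^t u_s\,\mathrm{d}s+2K\int_0^t\|\sigma_s-\sigma'_s\|_{\text{TV}}\,\mathrm{d}s.
\end{align*}

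To close the estimate, we multiply by $e^{-\beta t}$ and use $\int_0^t e^{\beta s}\,\mathrm{d}s\le e^{\beta t}/\beta$. This yields
\begin{align*}
\|p-p'\|^{\infty,\beta}_{\text{TV}}\le \frac{2\bar\lambda}{\beta}\|p-p'\|^{\infty,\beta}_{\text{TV}}+\frac{2K}{\beta}\|\sigma-\sigma'\|^{\infty,\beta}_{\text{TV}}.
\end{align*}
Choosing $\beta>2\bar\lambda+4K$ gives a contraction constant $\frac{2K/\beta}{1-2\bar\lambda/\beta}<1$. One could instead use Gronwall first, but the weighted norm handles both terms at once. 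Banach's theorem then gives a unique fixed point, which is the claimed existence and uniqueness. The expected obstacle is this contraction estimate, in particular handling the mixed term in the TV norm correctly and verifying measurability and continuity so that $\Phi$ is well defined.
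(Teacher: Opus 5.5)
Your proposal is correct and follows essentially the same route as the paper: you freeze the measure argument, obtain $\Phi(\sigma)$ from Theorem~\ref{th:linear}, and prove a contraction in $\|\cdot\|_{\text{TV}}^{\infty,\beta}$ by splitting the difference into a frozen-kernel term acting on $p_s-p'_s$ (bounded by $2\bar\lambda\|p_s-p'_s\|_{\text{TV}}$) and a kernel-difference term integrated against a probability measure (bounded via (L1) by $2K\|\sigma_s-\sigma'_s\|_{\text{TV}}$). The remaining differences are cosmetic (freezing at $\sigma$ rather than $\sigma'$, and the choice of $\beta$), and your explicit check that fixed points of $\Phi$ are exactly the solutions of~\eqref{eq:nonlinear}, including TV-continuity of any solution, fills in a step the paper leaves implicit.
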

\begin{proof}
       Define the mapping $\mathcal{M}:C([0,T],\mathcal{P}(X))\rightarrow C([0,T],\mathcal{P}(X))$ with $\mathcal{M}(\sigma)=(p_t^{\sigma})_{t\in[0,T]}$ as follows:
       \begin{align}\label{eq:help1}
              p_t^{\sigma}(A)=\zeta(A)+\int_0^t\bigg[\int_X\mu_s(x,\sigma_s,A)p_s^{\sigma}(\mathrm{d}x)-\int_A\lambda_s(x,\sigma_s)p_s^{\sigma}(\mathrm{d}x)\bigg]\mathrm{d}s
       \end{align}
       for $A\in\mathcal{B}(X)$ and $t\in[0,T]$. Since~\eqref{eq:help1} is of the form~\eqref{eq:linear}, Theorem~\ref{th:linear} ensures that the mapping $\mathcal{M}$ indeed for each input $\sigma\in C([0,T],\mathcal{P}(X))$ returns a unique output $\mathcal{M}(\sigma)\in C([0,T],\mathcal{P}(X))$. The goal is to prove that there exists some $\beta>0$ and $0<C<1$ such that, for all $\sigma, \sigma' \in C([0,T],\mathcal{P}(X))$, it holds that
       \begin{align*}
              \|\mathcal{M}(\sigma)-\mathcal{M}(\sigma')\|_{\text{TV}}^{\infty,\beta}\leq C\|\sigma-\sigma'\|_{\text{TV}}^{\infty,\beta}.
       \end{align*} 
       This would show that $\mathcal{M}$ is a contraction, in which case our desired result follows directly from the Banach fixed-point theorem.
       
       Let $\sigma,\sigma'\in C([0,T],\mathcal{P}(X))$ and define $\Delta:=\mathcal{M}(\sigma)-\mathcal{M}(\sigma')\in C([0,T],\mathcal{S}(X))$ with $\Delta_t=p_t^{\sigma}-p_t^{\sigma'}\in\mathcal{S}(X)$. Note that $\Delta_0$ is the null-measure. Using this notation, we may write 
       \begin{align*}
              \|\mathcal{M}(\sigma)-\mathcal{M}(\sigma')\|_{\text{TV}}^{\infty,\beta}=\sup_{t\in[0,T]}e^{-\beta t}\sup_{\phi\in B_1(X)}\bigg|\int_X\phi(x)\Delta_t(\mathrm{d}x)\bigg|,
       \end{align*}
	A natural starting point is thus the right-hand side integral.
       
       Utilising~\eqref{eq:help1}, a standard simple function approximation argument combined with the dominated convergence theorem yields
       \begin{align*}
              \frac{\partial}{\partial t}\int_X \phi(x) \, p_t^{\sigma}(\mathrm{d}x) =& \int_X \bigg[ \int_X \phi(y) \, \mu_s(x, \sigma_s, \mathrm{d}y) - \phi(x)\lambda_t(x, \sigma_s) \bigg] p_s^{\sigma}(\mathrm{d}x),\\
              \int_X\phi(x)\,p_0^{\sigma}(\mathrm{d}x)=&\int_X\phi(x)\zeta(\mathrm{d}x)
       \end{align*}
       for $t\in[0,T]$ and $\phi\in B_1(X)$. As a consequence of this, we can write 
       \begin{align*}
              \int_X\phi(x)\Delta_t(\mathrm{d}x)&=\int_0^t\bigg[\underbrace{\frac{\partial}{\partial s} \int_X \phi(x) \, p_s^{\sigma}(\mathrm{d}x)-\frac{\partial}{\partial s} \int_X \phi(x) \, p_s^{\sigma'}(\mathrm{d}x)}_{=\frac{\partial}{\partial s} \int_X \phi(x) \, \Delta_s(\mathrm{d}x)}\bigg]\mathrm{d}s
       \end{align*}
       Inserting an intermediate term, we find that
       \begin{align*}
              \frac{\partial}{\partial t} \int_X \phi(x) \, \Delta_t(\mathrm{d}x)
              =
              I_1(t) + I_2(t)
       \end{align*}
       with
       \begin{align*}
              I_1(t) &:= \int_X \bigg[ \int_X \phi(y) \mu_t(x,\sigma_t',\mathrm{d}y) - \phi(x)\lambda_t(x,\sigma_t')\bigg] \, \Delta_t(\mathrm{d}x), \\
              I_2(t) &:= \int_X \bigg[ \int_X \phi(y) \big(\mu_t(x,\sigma_t,\mathrm{d}y) - \mu_t(x,\sigma'_t,\mathrm{d}y)\big)\bigg] \, p^{\sigma}_t(\mathrm{d}x) \\
              &\,\quad- \int_X \phi(x)\big(\lambda_t(x,\sigma_t) - \lambda_t(x,\sigma_t')\big) \, p_t^{\sigma}(\mathrm{d}x).
       \end{align*}
       We evaluate each term separately. Define $\psi_t(x) := \int_X \phi(y) \mu_t(x,\sigma_t',\mathrm{d}y) - \phi(x)\lambda_t(x,\sigma_t')$ and $M := \sup_{t \in [0,T], x \in X, \sigma \in \mathcal{P}(X)} \lambda_t(x,\sigma) < \infty$. Note also that $\lVert \mu_t(x,\sigma_t', \cdot) \rVert_{\text{TV}} = \lambda_t(x,\sigma_t')$. Since $\phi\in B_1(X)$, it holds that
       \begin{align*}
              |\psi_t(x)|\leq \|\mu_t(x,\sigma_t',\cdot)\|_{\text{TV}}+|\lambda_t(x,\sigma_t')|=2|\lambda_t(x,\sigma_t')|\leq 2M,
       \end{align*}
       Using this result, we arrive at
       \begin{align*}
              \big| I_1(t) \big| \leq \sup_{x\in X}|\psi_t(x)|\|\Delta_t\|_{\text{TV}}
              \leq 2M  \lVert \Delta_t \rVert_{\text{TV}}.
       \end{align*}
       Invoking the triangle inequality and~(L1) yields
       \begin{align*}
              \big| I_2(t) \big| &\leq \Big(K\lVert \sigma_t-\sigma'_t \rVert_{\text{TV}} + \sup_{x\in X} |\lambda_t(x,\sigma_t) - \lambda_t(x,\sigma_t')|\Big)  \lVert p_t^{\sigma} \rVert_{\text{TV}}.
       \end{align*} 
       Recall that $p_t^{\sigma}$ is a probability measure; in particular, $\lVert p_t^{\sigma} \rVert_{\text{TV}}=1$. Further,
       \begin{align*}
              \lambda_t(x,\sigma_t) - \lambda_t(x,\sigma_t')
              =
              \mu_t(x,\sigma_t, X) - \mu_t(x,\sigma_t', X),
       \end{align*}
       which due to~(L1) implies 
       \begin{align*}
              \big| \lambda_t(x,\sigma_t) - \lambda_t(x,\sigma_t') \big| \leq \lVert \mu_t(x,\sigma_t, \cdot) - \mu_t(x,\sigma_t', \cdot) \rVert_{\text{TV}}\leq K\|\sigma_t-\sigma_t'\|_{\text{TV}}.
       \end{align*}
       Consequently, we arrive at
       \begin{align*}
              \big| I_2(t) \big| \leq 2K\lVert \sigma_t-\sigma_t' \rVert_{\text{TV}}.
       \end{align*}
       Collecting results yields
       \begin{align*}
              \bigg| \frac{\partial}{\partial t} \int_X \phi(x) \, \Delta_t(\mathrm{d}x) \bigg| \leq 2(M+K) \big(\lVert \Delta_t \rVert_{\text{TV}}+\|\sigma_t-\sigma_t'\|_{\text{TV}}\big)
       \end{align*}
       for all $\phi \in B_1(X)$, which implies
       \begin{align*}
              \|\Delta_t\|_{\text{TV}}\leq\int_0^t2(M+K)\big(\|\Delta_s\|_{\text{TV}}+\|\sigma_s-\sigma_s'\|_{\text{TV}}\big)\mathrm{d}s.
       \end{align*}
       This can be rewritten into
       \begin{align*}
              e^{-\beta t}\|\Delta_t\|_{\text{TV}}&\leq\int_0^te^{-\beta (t-s)}2(M+K)\big(e^{-\beta s}\|\Delta_s\|_{\text{TV}}+e^{-\beta s}\|\sigma_s-\sigma_s'\|_{\text{TV}}\big)\mathrm{d}s \\
              &\leq
              \frac{2(M+K)}{\beta}\bigg(\sup_{s\in[0,t]}e^{-\beta s}\|\Delta_s\|_{\text{TV}}+\sup_{s\in[0,t]}e^{-\beta s}\|\sigma_s-\sigma_s'\|_{\text{TV}}\bigg).
       \end{align*}
       Consequently,
       \begin{align*}
              \sup_{t\in[0,T]}e^{-\beta t}\|\Delta_t\|_{\text{TV}}\leq\frac{2(M+K)}{\beta}\bigg(\sup_{t\in[0,T]}e^{-\beta t}\|\Delta_t\|_{\text{TV}}+\sup_{t\in[0,T]}e^{-\beta t}\|\sigma_t-\sigma_t'\|_{\text{TV}}\bigg).
       \end{align*}
       Choosing $\beta=6(M+K) < \infty$ and rearranging yields
       \begin{align*}
              \sup_{t\in[0,T]}e^{-\beta t}\|\Delta_t\|_{\text{TV}}\leq \frac{1}{2}\sup_{t\in[0,T]}e^{-\beta t}\|\sigma_t-\sigma_t'\|_{\text{TV}},
       \end{align*}
       which is the desired contraction property (with $C = \frac{1}{2}$).
\end{proof}

We conclude with a brief comparison to the results of~\cite{Hornung2026}. Let $\lVert \nu \rVert_{\text{BL}}$ denote the bounded Lipschitz norm. In~\cite{Hornung2026}, existence of a solution to~\eqref{eq:nonlinear} is proven under the following alternative conditions:
\begin{itemize}
\item[(B1)] $\sup_{t \in [0,T], x \in X, \sigma \in \mathcal{P}(X)} \lambda_t(x,\sigma) < \infty$ 
\item[(L2)] There exists a constant $K > 0$ such that for all $t\in[0,T]$, $x,x' \in X$, and $\sigma, \sigma' \in \mathcal{P}(X)$ it holds that
\begin{align*}
\lVert \mu_t(x,\sigma,\cdot) - \mu_{t}(x',\sigma',\cdot) \rVert_{\text{TV}} \leq K \big(\mathds{1}_{\{x \neq x'\}} + \lVert \sigma - \sigma' \rVert_{\text{BL}}\big).
\end{align*}
\end{itemize}
The conditions~(L1) and~(L2) are related as follows. Since $\mu_t(x,\sigma,\mathrm{d}y)$ is a measure with $\mu_t(x,\sigma,\{x\})=0$, it holds that $\lVert \mu_t(x,\sigma, \cdot) \rVert_{\text{TV}} = \lambda_t(x,\sigma)$. Consequently, given (B1), the condition (L1) is equivalent to the following condition: There exists $C>0$ such that for all $t\in[0,T]$, $x,x' \in X$, and $\sigma, \sigma' \in \mathcal{P}(X)$ it holds that
\begin{align*}
\lVert \mu_t(x,\sigma,\cdot) - \mu_{t}(x',\sigma',\cdot) \rVert_{\text{TV}} \leq C \big(\mathds{1}_{\{x \neq x'\}} + \lVert \sigma - \sigma' \rVert_{\text{TV}}\big).
\end{align*}
Since $\lVert \nu \rVert_{\text{BL}} \leq \lVert \nu \rVert_{\text{TV}}$, we thus see that~(L2) implies~(L1), but not the converse. 

How about the remaining results in~\cite{Hornung2026}? Contrary to the existence and uniqueness of the non-linear forward integro-differential equations~\eqref{eq:nonlinear}, condition~(L2) may not be relaxed to~(L1) in regard to the convergence results stated in Theorem~4.4, Theorem~5.3, and Theorem~5.6 of~\cite{Hornung2026}. The reason for this is that convergence of the empirical distribution of random variables with non-atomic distributions breaks down when using the total variation rather than the bounded Lipschitz norm. 

\section*{Acknowledgments}

Both authors have carried out this research in association with the project frame InterAct.

\section*{Disclosure statement}

The expressed opinions are attributable solely to us and do not necessarily reflect the views of any of our past, current, and future employers.

\section*{Artificial Intelligence statement}
The Generative AI tool Gemini 3.6 Flash developed by Google has been used to assist with proof exploration and for copyediting purposes.

\bibliographystyle{amsplain}
\bibliography{references.bib}

@article{FahrenwaldtEtAl2018,
  author = {M.A. Fahrenwaldt and S. Weber and K. Weske},
  title = {{Pricing of cyber insurance contracts in a network model}},
  journal = {ASTIN Bulletin},
  year = {2018},
  volume = {48},
  issue = {3},
  pages = {1175-1218},
  doi = {10.1017/asb.2018.23}}

@article{FeinbergMandavaShiryaev2014,
  doi = {10.1016/j.jmaa.2013.09.043},
  journal = {Journal of Mathematical Analysis and Applications},
  volume = {411},
  year = {2014},
  pages = {261-270},
  title = {{On solutions of Kolmogorov's equations for nonhomogeneous jump Markov processes}},
  author = {E.A. Feinberg and M. Mandava and A.N. Shiryaev}}

@article{FeinbergMandavaShiryaev2022,
  doi = {10.1007/s10479-017-2538-8},
  journal = {Annals of Operations Research},
  volume = {317},
  issue = {2},
  year = {2022},
  pages = {587-604},
  title = {{Kolmogorov’s equations for jump Markov processes with unbounded jump rates}},
  author = {E.A. Feinberg and M. Mandava and A.N. Shiryaev}}

@article{Feller1940,
  author = {W. Feller},
  title = {{On the Integro-Differential Equations of Purely Discontinuous Markoff Processes}},
  journal = {Transactions of the American Mathematical Society},
  year = {1940},
  volume = {48},
  issue = {3},
  pages = {488-515},
  doi = {10.2307/1990095}}

@article{FengGarrdio2011,
  author = {R. Feng and J. Garrido},
  title = {{Actuarial Applications of Epidemiological Models}},
  journal = {North American Actuarial Journal},
  year = {2011},
  volume = {15},
  pages = {112-136},
  doi = {10.1080/10920277.2011.10597612}}

@inbook{FengEtAl2022,
 author = {R. Feng and J. Garrido and L. Jin and S.-H. Loke and L. Zhang},
 editor = {M.d.C. Boado-Penas and J Eisenberg and {\c{S}}. {\c{S}}ahin},
 title={{Epidemic Compartmental Models and Their Insurance Applications}},
 bookTitle={{Pandemics: Insurance and Social Protection}},
 year={2022},
 publisher={Springer International Publishing},
 pages={13-40},
 doi={10.1007/978-3-030-78334-1_2}}

@article{FrancisSteffensen2024,
  author = {L. Francis and M. Steffensen},
  title = {{Individual life insurance during epidemics}},
  journal = {Annals of Actuarial Science},
  year = {2024},
  volume = {18},
  pages = {152-175},
  doi = {10.1017/S1748499523000209}}

@article{HillairetLopez2021,
  author = {C. Hillairet and O. Lopez},
  title = {{Propagation of cyber incidents in an insurance portfolio: counting processes combined with compartmental epidemiological models}},
  journal = {Scandinavian Actuarial Journal},
  year = {2021},
  volume = {2021},
  issue = {8},
  pages = {671-694},
  doi = {10.1080/03461238.2021.1872694}}

@article{HillairetEtAl2022,
  author = {C. Hillairet and O. Lopez and L. d'Oultremont and B. Spoorenberg},
  title = {{Cyber-contagion model with network structure applied to insurance}},
  journal = {Insurance: Mathematics and Economics},
  year = {2022},
  volume = {107},
  pages = {88-101},
  doi = {10.1016/j.insmatheco.2022.08.002}}

@article{Hornung2026,
  doi = {10.1080/03461238.2026.2693555},
  journal = {Scandinavian Actuarial Journal},
  year = {2026},
  pages = {1-54},
  title = {{Mean-field approximations in insurance}},
  author = {P.C. Hornung}}

\end{document}